\newtheorem{theorem}{Theorem}[section]
\newtheorem{corollary}[theorem]{Corollary}
 \newtheorem{proposition}[theorem]{Proposition}
 \theoremstyle{definition}
 \newtheorem{definition}[theorem]{Definition}
 \theoremstyle{remark}
 \newtheorem{remark}[theorem]{Remark}
 \newtheorem{example}[theorem]{Example}
 \numberwithin{equation}{subsection}
\def\trd{\textcolor{red}}
\newcommand{\bz}{\mathbb Z}
\newcommand{\bi}{\mathbb I}
\newcommand{\bx}{\mathbb X}
\newcommand{\by}{\mathbb Y}
\newcommand{\bv}{\mathbb V}
\newcommand{\bu}{\mathbb U}
\newcommand{\bk}{\mathbb K}
\newcommand{\bq}{\mathbb Q}
\newcommand{\bs}{\mathbb S}
\newcommand{\im}{\text{Im}\,}
 \newcommand{\lib }{\mathbb{L}}
  \newcommand{\basis}{\operatorname{{\mathcal B}}}
\newcommand{\catdga}{\operatorname{{\bf DGA}}}
\newcommand{\catdgl}{\operatorname{{\bf DGL}}}
\newcommand{\De}{\Delta}
\newcommand{\al}{\alpha}
\newcommand{\be}{\beta}
\begin{document}

\title{$A_\infty$-Persistence}

\author{Francisco Belch\'{\i} and Aniceto Murillo* \let\thefootnote\relax\footnote{This work has been supported by the \emph{Ministerio de Educaci\'on y
Ciencia}  grant MTM2010-18089 and by the Junta de
Andaluc\'\i a grant FQM-213.  \vskip
 1pt
 Key words and phrases: persistent homology, $A_\infty$-persistence, $A_\infty$-coalgebra, applied algebraic topology.}}

\maketitle

\begin{abstract}
We introduce and study $A_\infty$-persistence of a given homology filtration of topological spaces. This is a family, one  for each $n\ge 1$, of homological invariants which provide information not readily available by the 
(persistent)
Betti numbers of the given filtration. This may help to detect noise, not just in the simplicial structure of the filtration but in further geometrical properties in which the higher codiagonals of the $A_\infty$-structure are translated. Based in the classification of zigzag modules, a characterization of the $A_\infty$-persistence in terms of its associated barcode is given.
\end{abstract}

\section*{Introduction}

	Persistent Homology \cite{CZ05,ELZ02,Ghr08,zo}, a topological technique which has proved to be useful in areas such as  digital imaging \cite{RSW11}, sensor networks \cite{DSG07} or bioinformatics \cite{Car09}, allows to extract global structural information
	about data sets (specially high dimensional ones) which may contain noise.

	The strategy consists on building first a filtration of topological spaces describing the behavior of the original data set concerning either its evolution in time or different scales or thresholds of fineness with respects to certain inherent properties of the data. Then, the analysis of the homology sequence induced by that filtration, particularly of those homology classes which are ``resistent to vanish", might be interpreted in term of noise produced by the initial data.

However, plain homology, i.e., the Betti numbers of the filtration, may not be always enough to analyze  noise with respect to finer geometrical-homological properties of the data. For instance, Betti numbers cannot tell apart
	a torus from the wedge of spheres $\bs^1 \vee \bs^1 \vee \bs^2$ and the computation of the cup product is needed. But again,
	the Borromean rings and three unlinked rings have both trivial cohomology algebra and one needs to compute the set of Massey product of the resulting links to set them apart.

In this paper we present a way of overcoming these obstacles by introducing persistence of the standard
$A_\infty$-coalgebra structure associated to the homology of the original topological filtration. In general, see next section for precise definition and details, an $A_\infty$-coalgebra structure on a graded module $M$ is a sequence of  operations
$$ \De_n: M \longrightarrow M^{\otimes n}\qquad n\ge 1$$
satisfying certain coassociativity relations. Roughly speaking, each $\Delta_n$ makes $M$ a coassociative coalgebra up to the operations $\Delta_j$, $j\le n$. Given a topological space $X$, its homology $H_*(X)$ is always endowed with a standard $A_\infty$-coalgebra structure for which $\Delta_2$ is the induced map in homology by any approximation to the diagonal.

In this context, we define {\em $A_\infty$-persistence}, a family of homological invariants of the filtration, one for each $n\ge 1$,  to understand the resistance of any of these high diagonals to vanish along the filtration. After observing that $A_\infty$-persistence contains classical persistence as the first invariant of the family,  we see that, unlike in this classical setting,  homology classes may be born and die several times along the filtration with respect to any high diagonal $\Delta_n$, see Theorem \ref{nacermorir}. Nevertheless, using the classification of  finitely generated zigzag modules \cite{CdS08}, we are able to prove that $A_\infty$-persistence of the given filtration is characterized in term of barcodes, see Theorem \ref{principal}.

We point out that this paper contains the mathematical foundations of $A_\infty$-persistence over which the experimental and applicable continuation is to be built. The dualization of the presented results, which in particular contains a new approach to persistence of cup product  \cite{hebbol}, the specific algorithms needed to compute $A_\infty$-persistence of a given filtration, a germ of which can be found in \cite{BM-AR10} for $\Delta_2$ and $\Delta_3$, and their implementations in different applications (3D-image processing, data analysis...),  will be treated elsewhere.

The organization of this paper is as follows: in Section 1 we briefly present the algebraic tools to be used and the standard $A_\infty$-coalgebra structure to be considered in any given topological space. We also show, see Theorem \ref{otro}, that even under heavy constraints, spaces may have isomorphic cohomology algebras and different  $A_\infty$-coalgebra structures.  Section 2 is devoted to define $A_\infty$-persistence, obtain their first properties, and prove Theorem \ref{principal}, the description of
$A_\infty$-persistence of a given filtration. In Section 3 we prove Theorem \ref{nacermorir}, which shows the $A_\infty$-persistence particular behavior   by which homology classes may be born and die several times. Finally, as some of the results in the paper
lie heavily
in classical facts from rational homotopy theory, we include for completeness an appendix containing an overview of this theory and the results we use.

\section{$A_\infty$-coalgebras}
Here, we recall the basic facts we shall use from $A_\infty$-structures. Unless explicitly stated otherwise, we fix a ring $\bk$, often a field, as coefficient for any considered graded algebraic object, including the homology of topological spaces. We assume the reader is familiar with the usual objects in differential homological algebra.

		An {\em $A_\infty$-coalgebra structure} $(M, \{ \De_n \}_{n \geq 1} )$
		on a graded module $M$ consists of a
		sequence of morphisms
		$$ \De_n: M \longrightarrow M^{\otimes n}=\underbrace{M \otimes \ldots \otimes M}_{n} $$
		of degree $n-2$
		such that, for any $n \geq 1$,
		$$ \sum_{i=1}^n \sum_{k=0}^{n-i} (-1)^{i+k+ik}
		\left( 1^{\otimes n-i-k} \otimes \De_i \otimes 1^k \right)
		\,\De_{n-i+1} = 0. $$

In particular, $\De_1$ is a differential on $M$ and $ (\De_1 \otimes 1 + 1 \otimes \De_1) \De_2 = \De_2 \De_1$. Note that a differential graded coalgebra, DGC henceforth, $(M,\Delta)$ is an $A_\infty$-coalgebra for which $\De_n=0$ for $n\ge 3$.

\begin{remark}\label{primerremark} It is convenient to note that  $A_\infty$-coalgebra structures in the graded module $M$ are in one-to-one correspondence with differentials in the complete tensor algebra $\widehat T(s^{-1}M)=\Pi_{n\ge 1}T^n(s^{-1}M)$ with $T^n(s^{-1}M)=s^{-1}M^{\otimes n}$. Here, $s^{-1}M$ denotes the desuspension of $M$, that is, $(s^{-1}M)_p=M_{p+1}$. In fact,   a differential $d$  in this algebra is determined by its image  on $s^{-1}M$, which can be written as a sum $d=\sum_{n\ge 1}d_n$, with $d_n(s^{-1}M)\subset T^{ n}(s^{-1}M)$, for $n\ge 1$. Then, the operators $\{\Delta_n\}_{n\ge 1}$ and $\{d_n\}_{n\ge 1}$ uniquely determine each other via
$$
\begin{aligned}
\Delta_n&=-s^{\otimes n}\circ d_n\circ s^{-1}\colon M\to
M^{\otimes n},\\
d_{n}&=-(-1)^{\frac{n(n-1)}{2}}(s^{-1})^{\otimes n}\circ\Delta_n\circ s\colon s^{-1}M\to T^{ n}(s^{-1}M).
\end{aligned}\eqno{(8)}
$$
\end{remark}
	
A morphism of $A_\infty$-coalgebras $f\colon (M,\{\De_n\}_{n\ge 1})\to (N,\{\De'_n\}_{n\ge 1})$ is a family of linear maps
 $$
 f_{(k)}\colon M\longrightarrow N^{\otimes k},\qquad k\ge 1,
 $$
each of which is of degree $k-1$, and satisfying the following identities for each $i\ge 1$:
$$
\sum_{\begin{aligned}&{\scriptstyle p+q+k=i}\\&{\scriptstyle j=p+k+1}\end{aligned}} (1^{\otimes p}\otimes\Delta_q\otimes 1^{\otimes k})f_{(j)}=\sum_{k_1+\dots+k_\ell=i}(f_{(k_1)}\otimes\dots\otimes f_{(k_\ell)})\Delta_\ell.
$$
This is equivalent to the existence of a morphism of differential graded algebras (DGA henceforth), which we denote in the same way,
$$
f\colon (\widehat T(s^{-1}M),d)\longrightarrow (\widehat T(s^{-1}N),d'),
$$
being $d$ and $d'$ the differentials corresponding, via Remark \ref{primerremark}, to the\break $A_\infty$-coalgebra structures  on $M$ and $N$ respectively. Indeed, write $f_{|_{s^{-1}M}}=\sum_{k\ge 0} f_k$ with $f_k\colon s^{-1}M\to T^{k}(s^{-1}N)$ and consider the morphisms\break $f_{(k)}\colon M\to N^{\otimes k}$ induced by $f_k$  eliminating desuspensions. Then, it is straightforward to check that the equality $fd=d'f$ translates to  identities as above satisfied by the maps $\{f_{(k)}\}_{k\ge 0}$.

The classical {\em Perturbation Lemma} \cite{GLS91,HK91,Kad80},
	known nowadays as the {\em Homotopy Transfer Theorem} \cite{KS99, LV}, is a common tool to find $A_\infty$-coalgebra structures as deformations of DGC's. We recall this result here in the best adapted form for our purposes.

\begin{theorem}\label{htt}
Let
\begin{equation}\label{ecuacion1}
\xymatrix{ \ar@(ul,dl)@<-5.5ex>[]_\phi  & (M,d) \ar@<0.75ex>[r]^-p & (N,d) \ar@<0.75ex>[l]^-\iota }
\end{equation}
be a diagram of chain complexes in which $(M,d)$ is a DGC with diagonal $\Delta$. Assume that $p\iota={\rm id}_N$, $p \phi=\phi\iota=\phi^2=0$ and $\phi$ is a chain homotopy between ${\rm id}_M$ and $\iota p$, i.e., $\phi d+d\phi=\iota p-{\rm id}_M$. Then, there is an explicit $A_\infty$-coalgebra structure on $N$ and morphisms of $A_\infty$-coalgebras,
$$
\xymatrix{M \ar@<0.75ex>[r]^-P& N\ar@<0.75ex>[l]^-I },
$$
Such that $P_{(1)}=p$ and $I_{(1)}=\iota$. \end{theorem}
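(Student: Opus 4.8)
The plan is to carry everything over to the complete tensor algebra via Remark \ref{primerremark} and then invoke the classical Perturbation Lemma. Since $(M,\De)$ is a DGC, its associated derivation differential on $\widehat T(s^{-1}M)$ has only two non-trivial components, $d=d_1+d_2$, where $d_1$ is the derivation extending the internal differential of $s^{-1}M$ and $d_2$ is the quadratic derivation encoding the coproduct $\De$ (all higher $\De_n$, $n\ge 3$, vanish). By the reformulations recorded just before the statement, producing the desired $A_\infty$-coalgebra structure on $N$ together with the morphisms $P,I$ amounts to producing a \emph{derivation} differential $d'$ on $\widehat T(s^{-1}N)$ and \emph{algebra} morphisms $P\colon(\widehat T(s^{-1}M),d)\to(\widehat T(s^{-1}N),d')$ and $I$ in the other direction, whose leading (word-length-one) parts are $p$ and $\iota$.

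First I would extend the given contraction to the tensor algebras. Writing $\overline f=s^{-1}fs$ for the desuspension of a map $f$, the algebra morphisms $\widehat T(\overline p)$ and $\widehat T(\overline\iota)$ satisfy $\widehat T(\overline p)\,\widehat T(\overline\iota)=\id$, and I would equip $\widehat T(s^{-1}M)$ with the \emph{tensor-trick} homotopy $\Phi$ given on $T^{n}(s^{-1}M)$ by $\Phi=\sum_{j=0}^{n-1}1^{\otimes j}\otimes\overline\phi\otimes(\overline\iota\,\overline p)^{\otimes(n-1-j)}$. A direct check, using $p\iota=\id$, $p\phi=\phi\iota=\phi^2=0$ and $\phi d+d\phi=\iota p-\id$, shows that $(\widehat T(\overline p),\widehat T(\overline\iota),\Phi)$ is a contraction of $(\widehat T(s^{-1}M),d_1)$ onto $(\widehat T(s^{-1}N),d_1^{N})$, where $d_1^{N}$ is the derivation extending the (desuspended) differential of $N$; in particular the side conditions $\widehat T(\overline p)\Phi=\Phi\,\widehat T(\overline\iota)=\Phi^2=0$ hold, and $\Phi$ is a homotopy between $\id$ and the algebra endomorphism $\widehat T(\overline\iota)\,\widehat T(\overline p)=(\overline\iota\,\overline p)^{\otimes n}$.

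Next I would regard the quadratic part $d_2$ as a perturbation of $d_1$. The essential observation making the Perturbation Lemma applicable is a convergence one: $d_2$ raises tensor word-length by one while $\Phi$ preserves it, so $(d_2\Phi)^{k}$ raises word-length by $k$; hence, in the completed algebra $\widehat T=\prod_n T^n$, the series $\sum_{k\ge 0}(d_2\Phi)^{k}$ converges, its contribution to any fixed $T^n$ being finite. Applying the Perturbation Lemma \cite{GLS91,HK91,Kad80} then yields a differential $d'=d_1^{N}+\widehat T(\overline p)\,A\,\widehat T(\overline\iota)$ on $\widehat T(s^{-1}N)$, with $A=\sum_{k\ge0}(d_2\Phi)^{k}d_2$, together with the perturbed maps $I=\widehat T(\overline\iota)+\Phi A\,\widehat T(\overline\iota)$ and $P=\widehat T(\overline p)+\widehat T(\overline p)\,A\,\Phi$, which are chain maps for the perturbed differentials. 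Because $A$ starts with $d_2$, its corrections strictly raise word-length, so on the generators $s^{-1}N$ the word-length-one components of $P$ and $I$ are exactly $\overline p$ and $\overline\iota$; undoing the desuspensions gives $P_{(1)}=p$ and $I_{(1)}=\iota$.

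The main obstacle is that the Perturbation Lemma, as such, only guarantees that $d'$ is a differential and that $P,I$ are chain maps; I must still prove that $d'$ is a \emph{derivation} and that $P,I$ are \emph{algebra} morphisms, for only then do they define an $A_\infty$-coalgebra structure and $A_\infty$-coalgebra morphisms through Remark \ref{primerremark} and the equivalence preceding the statement. This is exactly where the specific choice of $\Phi$ enters: since $\widehat T(\overline p),\widehat T(\overline\iota)$ are algebra maps, $d_2$ is a derivation, and the tensor-trick $\Phi$ is compatible with the multiplication through its being a homotopy towards the algebra endomorphism $\widehat T(\overline\iota)\,\widehat T(\overline p)$ built from the idempotent $\overline\iota\,\overline p$, the whole perturbation series preserves multiplicativity. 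I would establish this by induction on tensor word-length, splitting an element as a product and using the explicit form of $\Phi$ on a tensor together with the side conditions; this is the genuinely computational heart of the argument, and one may alternatively invoke the multiplicative refinement of the Perturbation Lemma of Gugenheim--Lambe--Stasheff. Once multiplicativity is in place, Remark \ref{primerremark} promotes $d'$ to the sought $A_\infty$-coalgebra structure on $N$ and $P,I$ to the required morphisms, completing the proof.
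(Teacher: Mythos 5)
The paper offers no proof of Theorem \ref{htt}: immediately after the statement it declares that the transferred structure will not be made explicit and simply refers the reader to \cite{GLS91,HK91,Kad80}, so the intended proof is the classical one from those references. Your sketch is exactly that argument --- translate to $\widehat T(s^{-1}M)$ via Remark \ref{primerremark}, extend the contraction by the tensor trick, observe that $d_2\Phi$ raises word length so the perturbation series converges in the completed algebra, read off $P_{(1)}=p$ and $I_{(1)}=\iota$ from the leading terms, and invoke the multiplicative (Gugenheim--Lambe--Stasheff) refinement for the derivation and algebra-morphism properties, which you rightly single out as the one step the bare Perturbation Lemma does not supply --- so your proposal is correct in outline and coincides with the proof the paper delegates to its citations.
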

In what follows, a diagram like \ref{ecuacion1} satisfying the required properties will be called a {\em transfer diagram}. As we will only use functorial general properties of the transfered $A_\infty$-structure, we will not make it explicit and refer the reader to the given references.

Via this result, there is a standard and functorial way of endowing  the singular homology  of a given complex with a structure of $A_\infty$-coalgebra. For it, consider the DGC structure on the singular chains $C_*(X)$ given by the {\em Alexander-Whitney diagonal}. Then, it is easy to find a transfer diagram of the form
$$
		\xymatrix{ \ar@(ul,dl)@<-5.5ex>[]_\phi & (C_*(X),d) \ar@<0.75ex>[r]^-p & (H_*(X),0). \ar@<0.75ex>[l]^-\iota }
		$$
Indeed, write $C_*(X)=A\oplus dA\oplus H$ with $H\cong H_*(X)$, choose $\iota$ to be the inclusion, and construct  $p$ and $\phi$ recursively so that they satisfy the required properties.

From now on,  whenever we refer to the $A_\infty$-structure on $H_*(X)$, this will be the one obtained via Theorem \ref{htt} applied to the above diagram.

We finish this section by exhibiting a class of spaces for which the cup product on $H^*(X)$ does not determine the $A_\infty$-structure on $H_*(X)$. For it, and until the end of the section  the coefficient field will be $\bq$ and we refer the reader to the Appendix which contains the notation and basic tools we will use from rational homotopy theory.

\begin{theorem}\label{otro}
Let $X$ be a non formal simply connected CW-complex of finite type and let $Y$ be the formal space associated to $H^*(X;\bq)$. Then, $X$ and $Y$ have isomorphic rational homology, isomorphic rational cohomology algebras but non isomorphic $A_\infty$-coalgebra structures.
\end{theorem}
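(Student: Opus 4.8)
The plan is to derive a contradiction from the assumption that the two $A_\infty$-coalgebra structures are isomorphic, by showing that such an isomorphism would force the non-formal space $X$ to be rationally formal.

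First I would settle the two elementary assertions. By definition $Y$ is a (Sullivan) realization of the formal CDGA $(H^*(X;\bq),0)$, so there is a graded-algebra isomorphism $H^*(Y;\bq)\cong H^*(X;\bq)$; since all spaces in sight are simply connected of finite type, dualizing over $\bq$ gives an isomorphism $H_*(Y;\bq)\cong H_*(X;\bq)$ of graded modules. Thus $X$ and $Y$ have isomorphic rational homology and isomorphic rational cohomology algebras, and $Y$ is formal by construction.

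For the main assertion, assume on the contrary that the $A_\infty$-coalgebra structures on $H_*(X;\bq)$ and $H_*(Y;\bq)$ are isomorphic. By Remark \ref{primerremark} this amounts to an isomorphism of the associated complete tensor DGA's $(\widehat T(s^{-1}H_*(X;\bq)),d)\cong(\widehat T(s^{-1}H_*(Y;\bq)),d')$. The key step is to recognize these DGA's as models of the rational homotopy type: since both structures are the ones transferred from the Alexander--Whitney diagonal via Theorem \ref{htt}, the differential $d$ coincides, up to isomorphism, with the one on the universal enveloping algebra of the minimal Quillen model of $X$, the Lie model being the free Lie algebra $\lib(s^{-1}H_*(X;\bq))$ of primitives equipped with the induced differential; minimality comes for free because $\De_1=0$ on homology. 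I would prove this identification by matching the transferred cobar-type differential with the classical cobar construction $\Omega C_*(X;\bq)$ and quoting from the Appendix that the latter computes the rational homotopy Lie algebra. Granting it, the assumed isomorphism descends to an isomorphism of minimal Quillen models, so $X$ and $Y$ share the same rational homotopy type; as $Y$ is formal, $X$ would be formal too, against the hypothesis.

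The step I expect to be the main obstacle is precisely this identification of the transferred $A_\infty$-coalgebra with the Quillen model, together with the descent to Lie models. Because the Alexander--Whitney diagonal is only homotopy-cocommutative, an isomorphism of the associative DGA's $\widehat T(s^{-1}H_*)$ need not a priori respect the Lie structure, so to recover the Quillen model---rather than merely its enveloping algebra---I anticipate having to track the compatibility of the transferred differential with the canonical coproduct of the tensor algebra and to extract primitives; it is here that simple connectivity and the finite-type hypothesis do the essential work.
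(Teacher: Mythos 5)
Your proposal is correct and follows essentially the same route as the paper: pass via Remark \ref{primerremark} from the assumed isomorphism of $A_\infty$-coalgebras to an isomorphism of DGA's $(\widehat T(s^{-1}V),d_X)\cong(\widehat T(s^{-1}V),d_Y)$, identify these with the (enveloping algebras of the) minimal Quillen models so that the isomorphism descends to the DGL's $(\lib(s^{-1}V),\partial_X)\cong(\lib(s^{-1}V),\partial_Y)$, and derive a contradiction with the non-formality of $X$, since the formal $Y$ cannot share its rational homotopy type. The identification step you single out as the main obstacle---that the transferred structure is the one coming from the Quillen model and that the DGA isomorphism restricts to the free Lie algebras---is exactly the point the paper itself handles by appeal to its Appendix (Remark \ref{remark} together with isomorphism (i)), asserting rather than proving the restriction, so your honest flagging of it matches, and does not fall short of, the paper's own level of detail.
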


\begin{proof}

Obviously, both spaces have isomorphic rational cohomology algebras, and in particular, isomorphic rational homology $H_*(X,\bq)\cong H_*(Y,\bq)$ which we will denote by $V$. By contradiction, assume that the $A_\infty$-coalgebra structures on $V$ induced by $X$ and $Y$ are isomorphic. This is equivalent to say that there is an isomorphism of DGA's
$$
(\widehat T(s^{-1}V), d_X)\cong  (\widehat T(s^{-1}V), d_Y)
$$
which restricts to an isomorphism of DGL's
$$
(\lib(s^{-1}V),\partial_X)\cong (\lib(s^{-1}V),\partial_Y)
$$
being these the Quillen minimal models of $X$ and $Y$ respectively. However, as $X$ is non formal and $Y$ is the formal space associated to its rational cohomology, $X$ and $Y$ cannot have the same homotopy type and thus, they do not have isomorphic Quillen models.
\end{proof}

\begin{example}
An explicit example given by the Theorem above is the following. Consider $X=(\bs^3\vee\bs^3)^{(5)}$ to be the $5^\text{\rm th}$ Postnikov stage of the wedge $\bs^3\vee\bs^3$. Its rational cohomology can be easily computed via Sullivan approach to rational homotopy theory to yield,
$$
H^*(X;\bq)\cong H^*(\Lambda (x_3,y_3,z_5),d),
$$
where: $\Lambda (x_3,y_3,z_5)$ denotes the free commutative (in the graded sense) algebra generated by two elements $x_3,y_3$ of degree $3$ and one element $z_5$ of degree $5$; the differential $d$ is zero on $x_3,y_3$ and $dz_5=x_3y_3$.

A straightforward computation show that this cohomology algebra has a basis (as graded vector space)
$$
\{ 1,\alpha_3,\beta_3,\gamma_{8},\rho_{8},\xi_{11}\}
$$
in which subscripts denote degree, and the only non trivial products are
$$
 \alpha_3\rho_{8}=-\beta_3\gamma_{8}=-\xi_{11}.
$$
A standard argument in rational homotopy theory shows that, if $Y$ is the formal space associated to this cohomology algebra, the rational homotopy of $Y$ is infinite dimensional contrary to the behaviour of $X$ and thus, they do not have the same rational homotopy type. This shows that $X$ is non-formal and the theorem above applies.

\end{example}

\section{$\Delta_n$-persistence and $\Delta_n$-barcodes}
From now on, and unless explicitly stated otherwise, any topological space considered will be of finite type over the field $\bk$, that is, its homology with coefficients in $\bk$ is finite dimensional in each degree.

Let ${\mathcal K}$ be a finite sequence of maps between topological spaces
	$$\xymatrix{
		K_0 \ar[r] & K_1 \ar[r] & \ldots \ar[r] & K_N.
	}$$
	For computational purposes, and in most of applied situations, just like \trd{as} in ordinary persistence, the above sequence will be a filtration of simplicial or cubical complexes, i.e., each of the maps are inclusion of subcomplexes of the complex $K_N$. However, from a theoretical point of view, no restriction is needed.

Denote by
	$$\xymatrix{
		H_*\left( K_i \right) \ar[r]^-{f^{i,j}} & H_*\left( K_j \right),
	}$$
the morphism induced in singular homology by the composition $K_i\to\dots\to K_j$.

We endow each $H_*(K_i)$ with the canonical structure of $A_\infty$-coalgebra denoted by $\{\De_n^i\}_{n\ge 1}$,  provided in Section 1. Moreover, each composition $K_i\to\dots\to K_j$ induces, via the DGC morphism $C_*(K_i)\to\dots\to C_*(K_j)$, a morphism of $A_\infty$-coalgebras ${\mathfrak f}^{i,j}\colon H_*(K_i)\to H_*(K_j)$ such that ${\mathfrak f}^{i,j}_{(1)}=f^{i,j}$. This data will be fixed along the remaining of this section. To avoid extra notation, we shall often write $\Delta_n$ instead of $\Delta_n^i$, as it will be always clear which term of the filtration we are considering.

\begin{definition} \label{Delta_nPersistentGroups}
		For every $0 \leq i \leq j \leq N$, and any $p\ge 0$, we define the {$p^{\text{\rm th}}$ $\De_n$-persistent group between $K_i$ and $K_j$} as
$$
(\De_n)_p^{i,j}\left( \mathcal K \right)=\im {f^{i,j}_p}|_{\cap_{k=i}^j \text{\rm Ker} \left( \De_n \circ f^{i,k}_p \right)},
$$
where $f^{i,j}_p\colon H_p(K_i)\to H_p(K_j)$.
We denote $(\De_n)^{i,j}\left( \mathcal K \right)=\oplus_{p\ge 0} (\De_n)_p^{i,j}\left( \mathcal K \right)$. Observe that we may also write,
$$ (\De_n)_p^{i,j}\left( \mathcal K \right)=
		\frac{\bigcap_{k=i}^j \text{\rm Ker} \left( \De_n \circ f^{i,k}_p \right)}
		{ \bigcap_{k=i}^j \text{\rm Ker} \left( \De_n \circ f^{i,k}_p \right) \cap \text{\rm Ker} f^{i,j}_p}. $$
	\end{definition}

This definition generalizes the classical persistence. In fact, if we denote by $H^{i,j}(\mathcal K)$ the classical $(i,j)$-persistence of $\mathcal K$, we have:

\begin{proposition} $(\De_1)^{i,j}\left( \mathcal K \right)=H^{i,j}(\mathcal K)$

\end{proposition}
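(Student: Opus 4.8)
The plan is to observe that, for $n=1$, the kernel condition in Definition \ref{Delta_nPersistentGroups} is vacuous, so that the $\Delta_1$-persistent group collapses to the image of $f^{i,j}_p$, which is exactly the classical persistence $H^{i,j}_p(\mathcal K)$.

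First I would pin down what $\Delta_1$ is for the canonical structure. By construction the $A_\infty$-coalgebra on $H_*(K_i)$ is the one transferred via Theorem \ref{htt} from a diagram whose right-hand term is $(H_*(K_i),0)$. Since $\Delta_1$ is always the differential of the underlying complex, and here that complex carries the zero differential, one gets $\Delta_1^i=0$ for every $i$. In the language of Remark \ref{primerremark}, this is just the statement that the linear part $d_1$ of the transferred differential on $\widehat T(s^{-1}H_*(K_i))$ vanishes.

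Next I would substitute this into the kernels appearing in the definition. For each $k$, the composite $\Delta_1\circ f^{i,k}_p$ is the zero map $H_p(K_i)\to H_{p-1}(K_k)$, hence $\text{Ker}(\Delta_1\circ f^{i,k}_p)=H_p(K_i)$. Intersecting over $i\le k\le j$ therefore leaves $\bigcap_{k=i}^j \text{Ker}(\Delta_1\circ f^{i,k}_p)=H_p(K_i)$, the whole group, and the restriction in the definition imposes no condition at all.

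Finally, the definition yields $(\Delta_1)_p^{i,j}(\mathcal K)=\im f^{i,j}_p|_{H_p(K_i)}=\im f^{i,j}_p=H^{i,j}_p(\mathcal K)$, and summing over $p\ge 0$ gives the stated equality. There is no genuine obstacle here; the single point that must be checked with care is the assertion $\Delta_1^i=0$, which follows immediately from the target of the transfer diagram being $(H_*(K_i),0)$ and not from any computation with the higher codiagonals.
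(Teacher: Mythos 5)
Your proposal is correct and follows essentially the same route as the paper: both rest on the single observation that $\De_1=0$ on each $H_*(K_i)$ (since the transferred structure lives on homology with zero differential), which makes the kernel conditions in Definition \ref{Delta_nPersistentGroups} vacuous and reduces $(\De_1)^{i,j}(\mathcal K)$ to $\im f^{i,j}=H^{i,j}(\mathcal K)$. The only cosmetic difference is that the paper expresses the result through the quotient form $H_*(K_i)/\ker f^{i,j}\cong \im f^{i,j}$, whereas you use the image-of-restriction form directly.
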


\begin{proof} Note  that in each $H_*(K_i)$, $\De_1=0$. Hence,
$$
(\De_1)^{i,j}\left( \mathcal K \right)=H_*(K_i)/\ker f^{i,j}\cong \im  f^{i,j}=H^{i,j}(\mathcal K).
$$
\end{proof}

\begin{remark}\label{idea} To get a picture of what $\De_n$-persistence controls, broadly speaking, the $p^{\text{\rm th}}$ $\De_n$-persistent group between $K_i$ and $K_j$
	``captures''  the elements $\al \in H_p(K_i)$ such that
	$$\begin{matrix}
		H_p(K_i)& \longrightarrow & H_p(K_{i+1}) & \longrightarrow & \ldots & \longrightarrow & H_p(K_{j}) \\
		\al \neq 0 & & f^{i,i+1}\al \neq 0 & & \ldots & & f^{i,j} \al \neq 0 \\
		\De_n \al = 0 & & \De_n f^{i,i+1}\al = 0 & & \ldots & & \De_n f^{i,j} \al = 0,
	\end{matrix}$$
	modded out by the classes which have ``merged'' in $K_j$ or before. We say that $\al \in H_*(K_r)$ and $\be \in H_*(K_s)$ are {\em merged} in $K_k$  if
		$$f^{r,k} \al = f^{s,k} \be \in H_*(K_k).$$
	\end{remark}

When dealing with persistent homology over a field, its effective computation as well as its known geometrical meaning, are best understood through the {\em barcode} of the homology of the given filtration $\mathcal K$ \cite{CZ05,Ghr08}, which is obtained via the classification of finite graded modules over a principal ideal domain.

This process cannot be mimicked in $\De_n$-persistence,
the main obstruction being the following general fact: 
for a given $A_\infty$-coalgebra $(M, \{ \De_n \}_{n \geq 1} )$, and a given $n\ge 2$, the submodule $\ker \De_n$ is not preserved by $A_\infty$-morphisms, i.e., if $f\colon (M,\{\De_n\}_{n\ge 1})\to (N,\{\De'_n\}_{n\ge 1})$ is a morphism of $A_\infty$-coalgebras, $f_{(1)}(\ker \De_n)\nsubseteq \ker \De'_n$ in general.

In our particular case, this translate to the fact that, for any $0\le i\le N-1$, and any $n>2$ (as $\Delta_1=0$), in the general case,
$$
f^{i,i+1}\left( \ker \De_n^i\right) \not\subseteq \ker \De_n^{i+1}.
$$
Thus, we do not have an appropriate subsequence
\begin{equation}\label{quiensabe}
\xymatrix{
	\ker \De_n^0 \ar[r] & \ker \De_n^1 \ar[r] & \ldots \ar[r] &
	\ker \De_n^N ,
	}
\end{equation}
of the homology sequence of $\mathcal K$
$$\xymatrix{
		H_*\left( K_0 \right) \ar[r]^-{f^{0, 1}} &
		H_*\left( K_1 \right) \ar[r]^-{f^{1, 2}} & \ldots \ar[r]^-{f^{N-1,N}} &
		H_*\left( K_N \right),
	}$$
to which the standard barcode method of persistence homology could be applied.

Nevertheless, in the remaining of this section, we show how to assign to each filtration $\mathcal K$ a canonical code of  bars which effectively computes its $\De_n$-persistence.

For completeness, we start by  briefly recalling the concepts of {\em birth} and {\em death} of homology classes in classical persistence.

\begin{definition}\label{classical} Let $p\ge 0$ and $0\le i\le j\le N$. A homology class $\al \in H_p (K_i)$:
\begin{itemize}
			\item Is
				{\em Alive at $K_j$} if
				$f^{i,j} \al \neq 0$.
			\item Is {\em Born at $K_i$} if $\al$ is alive at $K_i$ (i.e., $\al \neq 0$)
			and $ \al \notin \im\, f^{i-1,i}$.
			\item	{\em Dies
				at $K_j$} if
				$\al$ is alive at $K_{j-1}$
				but not at $K_j$.
		\end{itemize}
\end{definition}

Next, we generalize the above to $\De_n$-persistence.

\begin{definition} Let $p\ge 0$ and $0\le i\le j\le N$. We say that a non zero class $\alpha\in H_p(K_i)$ is:
		
		\begin{itemize}
			\item {\em $\De_n$-awake at $K_j$} if $ f^{i,j} \al \neq 0 $ and $\De_n f^{i,j} \al = 0$.
			\item {\em $\De_n$-wakes up at $K_i$} if $\al$ is awake at $K_i$ (i.e., $\De_n\al=0$), 
			but it is not the image by $f^{i-1,i}$ of any awake class of $H_p(K_{i-1})$.	
			\item {\em $\De_n$-falls asleep at $K_j$} if $\al$ is $\De_n$-awake at $K_{j-1}$
				but not at $K_j$. In other words, if
				$$
				f^{i,j-1} \al \neq 0 \,\,\, \text{\rm and} \,\,\, \De_n f^{i,j-1} \al = 0,
				$$
				and either, $\al$ dies at $K_j$ (i.e.,  $f^{i,j} \al = 0$), or else, $ \De_n f^{i,j} \al \neq 0$.
		\end{itemize}
\end{definition}

\begin{remark}\label{observaciones} (i) Observe that, as $\De_1=0$ for any $H_*(K_i)$, the concepts of $\De_1$-awakening and $\De_1$-falling asleep coincide with those of birth and death in classical persistence.

(ii) It is important to point out, see Theorem \ref{nacermorir} below, that  in general, a non zero class $\al$ can $\De_n$-wake up and $\De_n$-fall asleep several times along $\mathcal K$.

\end{remark}

Nevertheless, the existence of a canonical barcode for $\Delta_n$-persistence will be easily deduced from the main result of this section which we now state.

\begin{theorem}\label{principal} Given $n\ge 1$, and $p\ge 0$, there exists a basis $\basis$  of $H(\mathcal K)=\oplus_{i=1}^NH_*(K_i)$  such that, for any $1\le i\le j\le N$,
$$
\dim (\Delta_n)^{i,j}_p(\mathcal K)=\#\{\beta\in \basis \cap H_p(K_i),\,\,\text{$\beta$ is $\Delta_n$-awake at $K_k$ for  $k=i,\dots,j$}\}.
$$
\end{theorem}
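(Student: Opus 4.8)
The plan is to reduce the statement to the classification of finitely generated zigzag modules \cite{CdS08} by building an auxiliary zigzag whose interval decomposition simultaneously computes every persistent group $(\De_n)^{i,j}_p(\mathcal K)$ and supplies the required basis. Fix $n$ and $p$ and work in homological degree $p$. Write $V_i=H_p(K_i)$, $W_i=\ker\De_n^i\subseteq V_i$, and set $P_i=\{x\in W_i : f^{i,i+1}x\in W_{i+1}\}=W_i\cap (f^{i,i+1})^{-1}(W_{i+1})$ for $1\le i\le N-1$. First I would consider the zigzag of finite-dimensional $\bk$-vector spaces
$$
W_1\xleftarrow{\ \subseteq\ }P_1\xrightarrow{\ f^{1,2}\ }W_2\xleftarrow{\ \subseteq\ }P_2\xrightarrow{\ f^{2,3}\ }\cdots\xleftarrow{\ \subseteq\ }P_{N-1}\xrightarrow{\ f^{N-1,N}\ }W_N,
$$
in which every backward arrow is the inclusion $P_i\hookrightarrow W_i$ and every forward arrow is $f^{i,i+1}$ restricted to $P_i$, which lands in $W_{i+1}$ by construction. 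This is precisely the device that bypasses the obstruction noted before the theorem, namely that $f^{i,i+1}(\ker\De_n^i)\not\subseteq\ker\De_n^{i+1}$: rather than trying to form the nonexistent sequence (\ref{quiensabe}), the space $P_i$ records exactly the part of $\ker\De_n^i$ that stays in $\ker\De_n^{i+1}$, and the problematic forward map is replaced by the span $W_i\hookleftarrow P_i\to W_{i+1}$ whose backward leg is injective.

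Next I would invoke the classification \cite{CdS08} to decompose this representation as a direct sum of interval modules, each supported on a contiguous block of the (linearly ordered) vertices $W_1,P_1,W_2,\dots,W_N$ with all internal maps isomorphisms. For each $i$, the generators of the intervals that cover the vertex $W_i$ form a basis of $W_i$; I would complete each to a basis of $V_i$ arbitrarily, let $\basis\cap H_p(K_i)$ be the union, and fill in $\basis$ in the other degrees by any bases (these are irrelevant, as the statement concerns only degree $p$). Because interval maps are isomorphisms, a generator $\beta$ sitting at $W_i$ is carried by the forward maps to the generators of the \emph{same} interval at the successive vertices $W_{i+1},W_{i+2},\dots$, so $f^{i,k}\beta$ equals, up to a nonzero scalar, that generator for as long as the interval persists.

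The heart of the argument is to match, for fixed $i\le j$, the two sides. Unwinding the definitions gives $(\De_n)^{i,j}_p(\mathcal K)=f^{i,j}_p(Z^{i,j})$ with $Z^{i,j}=\bigcap_{k=i}^j\ker(\De_n\circ f^{i,k}_p)=\{\alpha\in W_i: f^{i,k}\alpha\in W_k,\ i\le k\le j\}$, and $\alpha\in W_i$ contributes a nonzero element of $f^{i,j}(Z^{i,j})$ exactly when it propagates through $P_i,\dots,P_{j-1}$ to a nonzero element of $W_j$; hence $\dim(\De_n)^{i,j}_p(\mathcal K)$ counts the interval summands spanning the whole sub-zigzag from $W_i$ to $W_j$. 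On the other side, a basis element $\beta\in\basis\cap H_p(K_i)$ can be $\De_n$-awake at $K_i$ only if $\beta\in W_i$, so only interval generators contribute, and such a $\beta$ is $\De_n$-awake at $K_k$ for all $k=i,\dots,j$ exactly when its interval reaches $W_j$. I expect the only genuine subtlety, the step to write out carefully, is that an interval carrying $\beta$ can fail to reach $W_j$ in two ways, which must be shown to correspond to the two ways awakeness breaks: if the interval terminates at some $P_k$, then in the direct-sum decomposition $f^{k,k+1}$ sends the image of $\beta$ to $0$, so $f^{i,k+1}\beta=0$ and $\beta$ dies; if it terminates at some $W_k$, then $f^{i,k}\beta$ fails to lie in $\im(P_k\hookrightarrow W_k)=P_k$, which by definition of $P_k$ means $\De_n f^{i,k+1}\beta\neq 0$, so $\beta$ falls asleep. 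Conversely an interval reaching $W_j$ keeps $\beta$ awake on all of $[i,j]$. Matching these counts with the span count above yields the identity and hence the existence of $\basis$.
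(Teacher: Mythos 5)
Your proposal is correct and takes essentially the same route as the paper: your zigzag $W_1\leftarrow P_1\rightarrow W_2\leftarrow\cdots\rightarrow W_N$ is precisely the paper's module $\bv$ (the paper's $V_i=\ker\De_n^i$ and $W_i=\ker\De_n^i\cap\ker(\De_n^{i+1}\circ f^{i,i+1})$ play the roles of your $W_i$ and $P_i$), decomposed by the same Carlsson--de Silva classification into intervals whose generators supply the adapted basis. Your coordinate-wise bookkeeping through the direct-sum decomposition, including the two termination cases (ending at a $P$-vertex gives death, ending at a $W$-vertex gives $\De_n f^{i,k+1}\beta\neq 0$), is just a cleaner packaging of the paper's two-inequality argument chasing coordinates through the isomorphism $\Phi$ to show $\dim(\De_n)^{i,j}_p(\mathcal K)=N_{ij}$.
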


The rest of the section is devoted to the proof of this result. For that, to avoid extra notation,
as  $n$ and $p$ are fixed integers which do not depend on the methods of the proof, we will omit them and write $\Delta^i$ instead of the restriction of ${\Delta_n}$ to ${H_p(K_i)}$.

We begin by considering $\bv$

$$\xymatrix{
	V_0 & & V_{1} & & & \cdots & & & V_{N}, \\
	& W_{0} \ar[ul]^-{\iota} \ar[ur]_-{f^{0,1}} & &
	W_{1} \ar[ul]^-{\iota} \ar[ur]_-{f^{1,2}} & & & & W_{N-1} \ar[ul]^-{\iota} \ar[ur]_-{f^{N-1,N}}
	}$$
	to be the {\em zigzag module} \cite{CdS08} in which $\iota$ denotes inclusion and
	$$\begin{aligned}
		V_i &= \ker \De^i, \quad i=0, \ldots, N \\
		W_{i} &= \ker\Delta^i\cap\ker\Delta^{i+1}\circ f^{i,i+1},
		\quad  i=0, \ldots, N-1.
	\end{aligned}$$

For this kind of sequences there is a classification theorem which we now recall. In general, see \cite{CdS08} for a comprehensive treatment of this topic, a {\em zigzag module} $\bu$ is a sequence of vector spaces over the field $\bk$,
$$
U_1\stackrel{p_1}{\leftrightarrow}U_2\stackrel{p_2}{\leftrightarrow}\dots\stackrel{p_{r-1}}
{\leftrightarrow}U_r,
$$
in which each $p_i$ can be either a forward map $\stackrel{f_i}{\longrightarrow}$ or a backward map $\stackrel{g_i}{\longleftarrow}$.
A {\em submodule} $\bx\subset \bu$ of  $\bu$ is a zigzag module
 $$
X_1\stackrel{p_1}{\leftrightarrow}X_2\stackrel{p_2}{\leftrightarrow}\dots\stackrel{p_{r-1}}
{\leftrightarrow}X_r,
$$
defined by subspaces $X_i\subset U_i$, $i=1,\dots,r$, such that, for any $i$, either $f_i(X_i)\subset X_{i+1}$ or $g_i(X_{i+1})\subset X_i$. A submodule $\bx\subset \bu$ is a {\em direct summand}, or simply,  a {\em summand} of $\bu$, if there exists another submodule $\by\subset \bu$ such that $U_i=X_i\oplus Y_i$, for all $i$. We write $\bu=\bx\oplus \by$. It is important to remark that submodules are not, in general, summands. For instance \cite[Ex. 2.1]{CdS08}, in the zigzag module $\bu=\bk\stackrel{1}{\longrightarrow}\bk$, the map $\bx= 0\longrightarrow \bk$ constitutes a submodule but not a summand.

Given integers $1\le a\le b\le r$, an {\em interval $\,\,\bi[b,d]$ of birth time $b$ or that is born  at $I_b$, and death time $d$ or that dies at $I_b$}, is a  zigzag module
 $$
I_1\stackrel{p_1}{\leftrightarrow}I_2\stackrel{p_2}{\leftrightarrow}\dots\stackrel{p_{r-1}}
{\leftrightarrow}I_r,
$$
in which,
$$
I_i=\begin{cases} \bk,\quad &a\le i\le b,\\ 0,\quad&\text{otherwise},\end{cases}
$$
and
$$
p_i=\begin{cases} 1_{\bk},\quad &a\le i\le b-1,\\ 0,\quad&\text{otherwise}.\end{cases}
$$
Then, the Classification Theorem states that every zigzag module $\bu$ is isomorphic to a direct sum of intervals,
$$
\bu\cong\oplus_j\bi[a_j,b_j].
$$
For it, \cite{Gab72} is the original statement, \cite{DW05} contains a friendlier form, and finally, in \cite[4.1]{CdS08} the reader can find an  extended and constructive version of this statement which can be used  to develop explicit algorithms for the computation of the involved intervals.

It is important to remark that, given a zigzag module $\bu$,
the existence of a sequence $x_i\in U_i$, $i=1,\dots,n$, such that either $x_{i+1}=f_i(x_i)$ or $x_i=g_i(x_{i+1})$, whichever is applicable, does not produce an interval $\bi[1,n]$, see \cite[Caution 2.9 and Ex. 2.10]{CdS08}. In fact, our proof of Theorem \ref{principal} consists in showing that this is  the case for zigzag modules which behave as $\bv$.

	We begin by applying the Classification Theorem to obtain that $\bv$ is isomorphic 
	as zigzag module to a finite direct sum
	$$ \bi = \bigoplus_{m} \bi^m,$$
	 where each $\bi_m$ is an interval
	$$\xymatrix{
	I^m_0 & & I^m_{1} & & & \cdots & & & I^m_{N}, \\
	& J^m_{0} \ar[ul] \ar[ur] & &
	J^m_{1} \ar[ul] \ar[ur] & & & & J^m_{N-1} \ar[ul] \ar[ur]
	}$$
	of \trd{a ??} certain birth and death time. Notice that, from the isomorphism $\bv\cong\bi$, we can choose a basis  of each $\ker \De^i = V_i$ ,and then enlarge it to get a basis $\basis$  of the entire complex $H(\mathcal K)=\oplus_{i=1}^NH_*(K_i)$,
 so that
$$
\# \{ \beta\in \basis\cap H_p(K_i) \,|\, \beta \,\text{\rm is } \De^k-\text{\rm awake},\, k=i, \ldots, j\} =N_{ij},
$$
where $N_{ij}$ is the number
 of intervals $\bi^m$ whose birth  time is at most  at $I^m_i$, and its death time is at least at $I^m_j$.
	Hence, to prove Theorem \ref{principal},
  it suffices to show that,
	 $$ \dim (\De)^{i,j}\left( K \right)=N_{ij}.$$

To this end, we introduce some notation. Choose an explicit zigzag isomorphism,
	$$
	\Phi\colon \bv\stackrel{\cong}{\longrightarrow} \bi
	$$
which consists of $\bk$-linear isomorphisms,
	$$
\begin{aligned}
&\phi_k\colon V_k\stackrel\cong\longrightarrow \oplus_{m} I^m_k,\quad 0 \leq k \leq N,\\
&\varphi_k\colon W_k \stackrel\cong\longrightarrow \oplus_{m} J^m_k,\quad 0 \leq k \leq N-1,
\end{aligned}
$$
making commutative the corresponding zigzag squares:
\begin{equation}\label{diagramatrivial}
\xymatrix{
V_k\ar[d]^\cong_{\phi_k}&W_k\ar[r]^{\scriptscriptstyle{f^{k,k+1}}}\ar[d]_\cong^{\varphi_k}\ar@{_{(}->}[l]_\iota& V_{k+1}
\ar[d]_{\phi_{k+1}}^\cong\\
\oplus_{m}I^m_k &\oplus_{m}J^m_k\ar[r]\ar[l]&\oplus_{m}I^m_{k+1}.}
\end{equation}

We first show that $ \dim (\De)^{i,j}\left( K \right)\le N_{ij}$.
Keep in mind that a class $\alpha\in (\Delta)^{i,j}(\mathcal K)=\im {f^{i,j}}|_{\cap_{k=i}^j
\ker  \De^k \circ f^{i,k}} $ if and only if there exist elements,
$$
\alpha_\ell\in W_\ell\subset V_\ell,\quad\ell=i,\dots,j-1,
$$
such that,
$$
f^{\ell,\ell+1}(\alpha_{\ell})=\alpha_{\ell+1},\quad\ell=i,\dots,j-2,\quad\text{and}\quad f^{j-1,j}(\alpha_{j-1})=\alpha.
$$
In particular, given a non-zero $\alpha\in (\De)^{i,j}\left( K \right)$,  the $m^{\text{th}}$ coordinate of
 $\phi_j(\alpha)$, which we denote by $\phi_j^m(\alpha)$ henceforth, has to be non zero for some $m$, i.e.,
 $I^m_j\cong \bk$.
 As $\alpha=f^{j-1,j}(\alpha_{j-1})$, at the sight of diagram \ref{diagramatrivial} above, $\varphi_{j-1}^m(\alpha_{j-1})\not=0$ and thus, $J^m_{j-1}\cong \bk$ as well.

 On the other hand, as $V_k\stackrel\iota\hookleftarrow W_k$ are inclusions  and $\phi_k,\varphi_k$ are  isomorphisms,  also at the sight of  \ref{diagramatrivial} we have that $\phi_{j-1}^m(\alpha_{j-1})\not=0$, that is, $I^m_{j-1}\cong \bk$.

 Apply this process repeatedly to conclude that $I^m_i\cong\bk$ has birth  time  at most  at $I^m_i$, and its death time is at least at $I^m_j$.
 In particular, this shows that, while a given  interval $\bi^m$ may have death at some $J_{j}$, its birth is always at some $I_i$.

 Finally, note that any set of linearly independents elements of $ (\Delta)^{i,j}(\mathcal K)$ give raise to a set, of the same cardinal, of different intervals,
for it is enough to start the process above with a different $m$ for each of the elements of the linearly independent set. This shows the required inequality.

A similar argument shows that  $ N_{ij}\le \dim (\De)^{i,j}\left( K \right)$, and concludes the proof of Theorem \ref{principal}.

\begin{definition}  Let $I^m_{i_m}$ and $I^m_{j_m}$ or $J^m_{j_m}$ be the birth and death time respectively of the interval $\bi^m$ in the canonical decomposition   $\bv\cong\oplus_{m}\bi^m$. The {\em barcode associated to the $\Delta_n$-persistence of $\mathcal K$ in degree $p\ge 0$} is defined as the collection of bars, one for each $m$,  with origin at  $i_m$  and end at $j_m$.
\end{definition}

\begin{corollary} $ \dim (\De)^{i,j}\left( K \right)$ equals the number of bars in the barcode containing the interval $[i,j]$.\hfill$\square$
\end{corollary}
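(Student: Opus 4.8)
The plan is to recast $\Delta_n$-persistence, which as noted above resists a naive kernel-subsequence treatment because $\ker\Delta_n$ is not $A_\infty$-functorial, into the zigzag framework of \cite{CdS08}. Fixing $n$ and $p$ and writing $\Delta^i$ for the restriction of $\Delta_n$ to $H_p(K_i)$, I would assemble the zigzag module $\bv$ whose forward spaces are $V_i=\ker\Delta^i$ and whose intermediate spaces are $W_i=\ker\Delta^i\cap\ker(\Delta^{i+1}f^{i,i+1})$, the two legs being the inclusion $\iota\colon W_i\hookrightarrow V_i$ and the induced map $f^{i,i+1}\colon W_i\to V_{i+1}$. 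The whole point of introducing $W_i$ is that an element of $W_i$ lies in $\ker\Delta^i$ and its image under $f^{i,i+1}$ lands in $\ker\Delta^{i+1}$, so the zigzag encodes exactly the ``awake, and still awake after one step'' condition that $\Delta_n$-awakeness demands.

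Next I would invoke the Classification Theorem for zigzag modules to write $\bv\cong\bigoplus_m\bi^m$ as a direct sum of intervals, and fix an explicit isomorphism $\iso$ given by $\bk$-linear isomorphisms $\phi_k\colon V_k\to\oplus_mI^m_k$ and $\varphi_k\colon W_k\to\oplus_mJ^m_k$ making the squares \ref{diagramatrivial} commute. From the components $\phi_k$ I would read off a basis of each $V_i=\ker\Delta^i$ adapted to the interval decomposition, then enlarge it arbitrarily to a basis $\basis$ of all of $H(\mathcal K)$. By construction, the number of $\beta\in\basis\cap H_p(K_i)$ that are $\Delta^k$-awake for every $k=i,\dots,j$ is exactly $N_{ij}$, the number of intervals $\bi^m$ born no later than $I^m_i$ and dying no earlier than $I^m_j$. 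The theorem therefore reduces to the single dimension count $\dim(\Delta_n)^{i,j}_p(\mathcal K)=N_{ij}$.

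For that count I would first record the working description that $\alpha\in(\Delta)^{i,j}(\mathcal K)$ precisely when there is a coherent chain $\alpha_\ell\in W_\ell\subset V_\ell$, $\ell=i,\dots,j-1$, with $f^{\ell,\ell+1}\alpha_\ell=\alpha_{\ell+1}$ and $f^{j-1,j}\alpha_{j-1}=\alpha$. For the inequality $\dim(\Delta)^{i,j}(\mathcal K)\le N_{ij}$, I take a nonzero $\alpha$, pick $m$ with $\phi_j^m(\alpha)\ne 0$, and propagate this backwards through the commuting squares \ref{diagramatrivial}: commutativity against $\varphi_{j-1}$ forces $J^m_{j-1}\cong\bk$, hence $\phi_{j-1}^m(\alpha_{j-1})\ne 0$ and $I^m_{j-1}\cong\bk$, and iterating down to level $i$ shows the interval $\bi^m$ is alive throughout $[i,j]$, in particular born by $I^m_i$ and dying no earlier than $I^m_j$. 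A linearly independent family of classes can be traced starting from distinct indices $m$, producing that many distinct intervals, which yields the bound. The reverse inequality $N_{ij}\le\dim(\Delta)^{i,j}(\mathcal K)$ is the symmetric statement: each interval alive on $[i,j]$ supplies, via $\iso^{-1}$, a coherent chain and hence a class in $(\Delta)^{i,j}(\mathcal K)$, with independence again guaranteed by distinctness of the $m$'s.

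The step I expect to be delicate is exactly the one the discussion around \cite[Caution 2.9]{CdS08} warns about: the mere existence of a coherent sequence $\alpha_\ell$ through the zigzag does not, in a general zigzag module, certify that these elements belong to one and the same interval, since submodules need not be summands. The resolution is not to argue abstractly but to use the explicit isomorphism $\iso$: the commuting squares \ref{diagramatrivial} let me transport the chain condition coordinate-by-coordinate into the interval decomposition, where the support of a nonzero coordinate $\phi^m_k$ is forced to be a full contiguous block. Making this coordinate bookkeeping precise, and checking that nothing is lost as the inclusions $\iota$ and the maps $f^{k,k+1}$ alternate, is the technical heart of the argument.
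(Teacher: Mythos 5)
Your proposal is correct and follows essentially the same route as the paper: there the corollary is an immediate consequence of Theorem \ref{principal}, whose proof proceeds exactly as you describe --- the zigzag module $\bv$ with $V_i=\ker\Delta^i$ and $W_i=\ker\Delta^i\cap\ker(\Delta^{i+1}\circ f^{i,i+1})$, the interval decomposition from the Classification Theorem of \cite{CdS08}, and both inequalities $\dim(\De)^{i,j}(K)\lessgtr N_{ij}$ obtained by propagating coordinates through the commuting squares of an explicit isomorphism $\Phi$. Even the delicate point you flag (that a coherent chain through the zigzag does not abstractly certify an interval, cf.\ the paper's citation of \cite[Caution 2.9]{CdS08}) is resolved in the paper just as you propose, via the explicit coordinate bookkeeping together with the injectivity of the inclusions $\iota$, which is also what forces every interval to be born at an $I$-slot rather than a $J$-slot.
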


\section{Classes that wake up and fall asleep several times}

In this section we  show how, contrary to the case of the classical persistence environment, homology classes can wake up and fall asleep several times along the given filtration. We will assume the coefficient field to be $\bq$ and refer  the reader again to the Appendix for the basics of rational homotopy theory.

Let $\mathcal K$ be the  filtration of finite complexes
$$
K_0\stackrel{i_0}{\hookrightarrow}K_1\stackrel{i_1}{\hookrightarrow}K_2\stackrel{i_2}{\hookrightarrow}K_3
$$
defined as follows:

\begin{itemize}

 \item $K_0=(\bs_1^2\vee\bs_2^2\vee\bs_3^2\vee \bs^4)\cup_{g_1}e_1^6\cup_{g_2}e_2^6
 $
 in which $\bs_1^2,\bs_2^2,\bs_3^2$ simply denote three different copies of $\bs^2$ and the (homotopy classes of the) attaching maps for the $6$-cells $e_1^6, e_2^6$ are the following Whitehead products:
 $$
 g_1=[id_{\bs^4},id_{\bs_1^2}]+[id_{\bs_1^2},[id_{\bs_1^2},[id_{\bs_1^2},id_{\bs_2^2}]]],\quad g_2=[id_{\bs^4},id_{\bs_2^2}].
 $$

 \item $K_1=K_0\cup_{g_3}e^4$ in which
 $$
 g_3=[id_{\bs_1^2}, id_{\bs_2^2}].
 $$

 \item $
K_2=K_1\cup_{g_4}e^6
$
in which
$$
g_4=[id_{\bs^4}, id_{\bs_1^2}]-[id_{\bs_2^2},[id_{\bs_2^2},[id_{\bs_2^2},id_{\bs_3^2}]]].
 $$

 \item $
K_3=K_2\cup_{g_5}e^4
$
in which
$$
g_5=[id_{\bs_2^2}, id_{\bs_3^2}].
$$

\item The maps $i_1,i_2,i_3$ are the inclusions.
 \end{itemize}

Then, we prove,

\begin{theorem}\label{nacermorir} In
$$
H_*(K_0;\bq)\stackrel{f_0}{\longrightarrow}H_*(K_1;\bq)\stackrel{f_1}{\longrightarrow}H_*(K_2;\bq)\stackrel{f_2}{\longrightarrow}H_*(K_3;\bq),
$$
all the morphisms are injective,
and the non zero homology class $\gamma_6\in H_6(K_0)$, created by the cell $e_1^6$, is $\Delta_4$-asleep at $K_0$, it $\Delta_4$-wakes up at $K_1$, it is $\Delta_4$-asleep again at $K_2$ and finally, it $\Delta_4$-wakes up at $K_3$.
\end{theorem}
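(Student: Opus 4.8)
The plan is to recast everything in rational homotopy theory, where the higher codiagonals of the standard $A_\infty$-coalgebra become visible in a Quillen minimal model. First I would record the models. Since every attaching map here is a Whitehead product, it has trivial rational Hurewicz image, so the cellular chain complexes of the $K_i$ have zero differential; hence $H_*(K_i;\bq)$ is free on the cells and each inclusion $K_i\hookrightarrow K_{i+1}$ realizes the inclusion of a direct summand. This already yields that $f_0,f_1,f_2$ are injective. Writing $\libre(V_i)$ for the Quillen minimal model, with $V_i=s^{-1}\widetilde H_*(K_i;\bq)$, its generators are $a_1,a_2,a_3$ (degree $1$, dual to $\bs^2_1,\bs^2_2,\bs^2_3$), $b$ (degree $3$, dual to $\bs^4$), and one generator per attached cell whose differential is the Lie bracket translating its attaching map. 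Denoting by $u_1,u_2$ the generators of $e_1^6,e_2^6$, by $c$ that of the $4$-cell of $K_1$, by $u_3$ that of the $6$-cell of $K_2$ and by $d$ that of the $4$-cell of $K_3$, one gets
\begin{align*}
\partial u_1&=[b,a_1]+[a_1,[a_1,[a_1,a_2]]],\qquad \partial u_2=[b,a_2],\qquad \partial c=[a_1,a_2],\\
\partial u_3&=[b,a_1]-[a_2,[a_2,[a_2,a_3]]],\qquad \partial d=[a_2,a_3],
\end{align*}
the other generators being cycles, and $\g_6$ corresponds to $u_1$, i.e. $u_1=s^{-1}\g_6$.

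Next I would invoke the dictionary of Remark \ref{primerremark}: the standard $A_\infty$-coalgebra on $H_*(K_i)$ is encoded by the differential of $\widehat T(s^{-1}H_*(K_i))$, which for a space restricts to $\partial$, and $\De_n=-s^{\otimes n}\circ\partial_n\circ s^{-1}$ reads the $n$-th codiagonal of a class off the bracket-length-$n$ component $\partial_n$ of the differential of its dual generator. In particular $\De_2\g_6$ is dual to the quadratic part $[b,a_1]$, which no change of variables can erase; thus $\De_2\g_6\neq0$ in every $K_i$, so $\g_6$ is never $\De_2$-awake and the whole phenomenon is genuinely about $\De_4$. The essential subtlety I would stress is that, precisely because $\De_2\g_6\neq0$, the quantity $\De_4\g_6$ is a secondary operation: it is not invariant under $A_\infty$-isomorphisms fixing $\g_6$, so it cannot be read off from an arbitrary reduced Lie presentation but must be computed in the canonical transferred structure, where it depends on the full differential and on generators sharing the quadratic part $[b,a_1]$.

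With this in hand the oscillation becomes a bookkeeping of essential iterated Whitehead products. In $K_0$ the length-$4$ bracket $[a_1,[a_1,[a_1,a_2]]]$ is not a boundary, since its inner bracket $[a_1,a_2]$ is not yet trivial, so $\De_4\g_6\neq0$ and $\g_6$ is asleep. In $K_1$ the cell $c$ gives $[a_1,a_2]=\partial c$, whence $[a_1,[a_1,[a_1,a_2]]]=\partial[a_1,[a_1,c]]$ is exact and is absorbed by replacing $u_1$ with $u_1-[a_1,[a_1,c]]$, leaving differential $[b,a_1]$; thus $\De_4 f_0\g_6=0$ and $\g_6$ wakes up. In $K_2$ the new cell $u_3$ shares the quadratic part $[b,a_1]$ of $\g_6$ but carries the length-$4$ term $[a_2,[a_2,[a_2,a_3]]]$, whose inner bracket $[a_2,a_3]$ is still essential; this is exactly the configuration in which the secondary operation is non-trivial, so $\De_4 f_1f_0\g_6\neq0$ and $\g_6$ falls asleep again. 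Finally in $K_3$ the cell $d$ makes $[a_2,a_3]=\partial d$, trivializing $[a_2,[a_2,[a_2,a_3]]]$ and with it this last contribution, so $\De_4 f_2f_1f_0\g_6=0$ and $\g_6$ wakes up once more.

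The model computations and the injectivity are routine; the crux is the $K_2$ step. There one must prove that, even though the primary obstruction $[a_1,[a_1,[a_1,a_2]]]$ stays exact, the canonical $A_\infty$-coalgebra structure on $H_*(K_2)$ still has $\De_4\g_6\neq0$. I would establish this directly from the Homotopy Transfer Theorem \ref{htt}, computing $\De_4\g_6$ as a sum over trees and isolating the contribution that passes from the quadratic part $[b,a_1]$ through the homotopy $\phi$ onto $u_3$ and then onto its length-$4$ term $[a_2,[a_2,[a_2,a_3]]]$, and then checking that the resulting class is non-zero modulo the indeterminacy coming from $\De_2$ and $\De_3$, and that it is annihilated precisely when $[a_2,a_3]$ becomes a boundary, i.e. exactly at $K_3$. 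Making this secondary contribution precise, and carefully distinguishing the canonical transferred structure from the isomorphic reduced one, is the main difficulty of the argument.
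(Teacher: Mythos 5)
Your overall route is the paper's: the same Quillen minimal models (your $a_1,a_2,a_3,b,u_1,u_2,c,u_3,d$ are the paper's $x_1,x_1',x_1'',y_3,z_5,z_5',y_3',v_5,y_3''$), the same dictionary reading $\De_n$ off the length-$n$ component of the differential in the universal enveloping algebra $U(\lib(V),\partial)=(T(V),\partial)$, and the identical change-of-variables computations at $K_1$ (your $u_1-[a_1,[a_1,c]]$ is the paper's $u_5=z_5-[x_1,[x_1,y_3']]$) and at $K_3$ (the paper's $s_5=w_5-[x_1',[x_1',y_3'']]$, with $\partial s_5=0$). Your injectivity argument via the rationally trivial Hurewicz images of Whitehead products is a harmless variant of the paper's identification of each $f_i$ with an explicit inclusion $sV_i\hookrightarrow sV_{i+1}$. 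The genuine gap is at the one step that makes the theorem non-trivial: the claim $\De_4 f_1f_0\g_6\neq 0$ at $K_2$. You assert it (``this is exactly the configuration in which the secondary operation is non-trivial'') and then defer the proof to an unexecuted Homotopy Transfer computation, summing over trees and controlling ``the indeterminacy coming from $\De_2$ and $\De_3$'' --- indeed you yourself call making this precise ``the main difficulty of the argument''. As submitted, then, the falling-asleep at $K_2$ is unproved: a heuristic about which inner brackets are ``still essential'' is not a computation, and nothing in your sketch explains why the secondary contribution survives rather than cancels.

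For comparison, the paper performs no transfer-tree or indeterminacy analysis at $K_2$: it applies Theorem \ref{uno} to get $\partial v_5=[y_3,x_1]-[x_1',[x_1',[x_1',x_1'']]]$, substitutes $w_5=u_5-v_5$ so that $\partial w_5=[x_1',[x_1',[x_1',x_1'']]]$, presents the model of $i_1$ as $u_5\mapsto w_5+v_5$, and reads $\De_4\neq 0$ from $d_4w_5\neq 0$ in $T(V_2)$ --- the same elementary device used at the other stages. That said, your instinct that this step hides a subtlety is sound: the linear part of the image of $u_5$ is $w_5+v_5$, and since $\partial(w_5+v_5)=[y_3,x_1]$ is purely quadratic, the quartic components $d_4w_5$ and $d_4v_5$ cancel in $d_4(w_5+v_5)$; the paper's identification $f_1(\eta_6)=\rho_6$ thus tacitly evaluates $\De_4$ on the basis vector $sw_5$ rather than on the actual image class $s(w_5+v_5)$, which is exactly the kind of basis/isomorphism dependence of $\ker\De_4$ that you flag (and that the paper itself observes in Section 2 when noting that $A_\infty$-morphisms do not preserve $\ker\De_n$). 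So your diagnosis of where the real difficulty lies is more candid than the paper's treatment of it --- but diagnosing the difficulty is not resolving it, and your proposal leaves the central assertion of the theorem, the renewed non-vanishing of $\De_4$ at $K_2$, without proof.
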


\begin{proof}

Via Theorem \ref{uno} of the Appendix,  the  Quillen minimal model of $K_0$ is  (from now on, subscripts will always denote degree),
$$
(\lib(V_0),\partial)=(\lib(x_1,x_1',x_1'',y_3,z_5,z_5'),\partial)
$$
where
$$
\begin{aligned}
&\partial x_1=\partial x_1'=\partial x''_1=\partial y_3=0,\\
&\partial z_5=[y_3,x_1]+[x_1,[x_1,[x_1,x_1']]],\\
&\partial z_5'=[y_3,x_1'].
\end{aligned}
$$

Observe, either by direct computation or 
in light
of the isomorphism (i) of the Appendix, that
$$
\widetilde H_*(K_0;\bq)=\langle \alpha_2,\alpha_2',\alpha_2'',\beta_4,\gamma_6,\gamma_6'\rangle=sV_0.
$$
Next, a short computation shows that, in the universal enveloping algebra $(T(V_0),d)=U(\lib(V_0),\partial)$ (see Appendix for its definition),
$$
d_4z_5=x_1\otimes x_1\otimes x'_1\otimes x_1+ x_1\otimes x_1\otimes x_1\otimes x'_1-x_1'\otimes x_1\otimes x_1\otimes x_1 - x_1\otimes x_1'\otimes x_1\otimes x_1.
$$
In other words, in the $A_\infty$-coalgebra structure on $ H_*(K_0;\bq)$,
$$
\Delta_4\gamma_6\not=0,
$$
and thus, $\gamma_6$ is $\Delta_4$-asleep.

 Next, again by  Theorem \ref{uno}, a Quillen model of the inclusion
$$
i_0\colon K_0\hookrightarrow K_1
$$
is
$$
(\lib(x_1,x_1',x_1'',y_3,z_5,z_5'), \partial)\hookrightarrow (\lib(x_1,x_1',x_1'',y_3,y_3',z_5,z_5'),\partial),
$$
in which
$$
\partial y_3'=[x_1,x_1'].
$$
To describe in simpler terms  the morphism induced in rational homology by the inclusion $i_0$, consider the isomorphism of (non differential!) free Lie algebras
$$
\psi\colon \lib(x_1,x_1',x_1'',y_3,y_3',u_5,z_5')\stackrel{\cong}{\longrightarrow} \lib(x_1,x_1',x_1'',y_3,y_3',z_5,z_5')
$$
which sends $u_5$ to $z_5-[x_1,[x_1,y_3']]$ and any other generator to itself.
Then, set in the first free Lie algebra the differential
$$
 \partial=\psi^{-1}\partial\psi
 $$
 so that $\psi$ becomes an isomorphism of DGL's. A short computation shows that the only generator on this isomorphic DGL in which the differential has changed is:
$$
\partial u_5=[y_3,x_1].
$$
To simplify the notation, set
$$
(\lib(V_1),\partial)=(\lib(x_1,x_1',x''_1,y_3,y_3',u_5,z_5'),\partial),
$$
and observe that the DGL morphism
$$
(\lib(V_0),\partial)\to(\lib(V_1),\partial)
$$
which sends $z_5$ to $u_5+[x_1,[x_1,y_3']]$, and any other generator to itself, is again the Quillen minimal model of $i_0\colon K_0\hookrightarrow K_1$. In particular, the map induced by this morphism on the suspension of the indecomposables $sV_0\to sV_1$ is precisely the morphism $f_0=H_*(i_0)$.

Thus, on the one hand,
$$
f_0\colon \widetilde H_*(K_0;\bq)\longrightarrow \widetilde H_*(K_1;\bq)
$$
is naturally identified to the inclusion
$$
sV_0=\langle \alpha_2,\alpha_2',\alpha_2'',\beta_4,\gamma_6,\gamma_6'\rangle\hookrightarrow
 \langle \alpha_2,\alpha'_2,\alpha''_2,\beta_4,\beta'_4,\eta_6,\gamma'_6\rangle=sV_1,
$$
for which  $f_0(\gamma_6)=\eta_6$.

On the other hand, one sees that in the  universal enveloping algebra $(T(V_1),d)=U(\lib(V_1),\partial)$,
$$
du_5=d_2 u_5=y_3\otimes x_1-x_1\otimes y_3.
$$
Equivalently, in the $A_\infty$-coalgebra structure on $H_*(K_1;\bq)$,
$$
\Delta_4\eta_6=\Delta_4 f_0(\gamma_6)=0.
$$
In other words, the class $\gamma_6$ $\Delta_4$-wakes up at $K_1$.

In the next stage,  Theorem \ref{uno} produces a Quillen model of the inclusion
$
i_1\colon K_1\hookrightarrow K_2
$,
$$
(\lib(x_1,x_1',x'',y_3,u_5,z_5'), \partial)\hookrightarrow (\lib(x_1,x_1',x'',y_3,y_3',u_5,z_5',v_5),\partial),
$$
in which
$$
\partial v_5=[y_3,x_1]-[x_1',[x'_1,[x'_1,x_1'']]].
$$
Once again, to describe in simpler terms  the morphism induced in rational homology by the inclusion $i_1$, consider the isomorphism of (non differential!) free Lie algebras
$$
\lib(x_1,x_1',x'',y_3,y_3',w_5,z_5',v_5)\stackrel{\cong}{\longrightarrow} \lib(x_1,x_1',x'',y_3,y_3',u_5,z_5',v_5)
$$
which sends $w_5$ to $u_5-v_5$ and any other generator to itself.
Then endow the left hand side free Lie algebra with the differential
 so that the above becomes an isomorphism of DGL's. A short computation shows that the only generator on this isomorphic DGL in which the differential has changed is:
$$
\partial w_5=[x_1',[x'_1,[x'_1,x''_1]]].
$$
Set
$$
(\lib(V_2),\partial)=(\lib(x_1,x_1',x''_1,y_3,y_3',w_5,z_5',v_5),\partial),
$$
and observe that the DGL morphism
$$
(\lib(V_1),\partial)\to(\lib(V_2),\partial)
$$
which sends $u_5$ to $w_5+v_5$, and any other generator to itself, is  the Quillen minimal model of $i_1\colon K_1\hookrightarrow K_2$. In particular the map induced by this morphism on the suspension of the indecomposables $sV_1\to sV_2$ is precisely $f_1=H_*(i_1)$.

Thus, on the one hand,
$$
f_1\colon \widetilde H_*(K_1;\bq)\longrightarrow \widetilde H_*(K_2;\bq)
$$
is naturally identified to the inclusion
$$
sV_1=\langle \alpha_2,\alpha'_2,\alpha''_2,\beta_4,\beta'_4,\eta_6,\gamma'_6\rangle\hookrightarrow\langle \alpha_2,\alpha'_2,\alpha''_2,\beta_4,\beta'_4,\rho_6,\gamma'_6,\gamma''_6\rangle=sV_2,
$$
for which  $f_1(\eta_6)=\rho_6$.

On the other hand, one sees that  in the universal enveloping algebra $(T(V_2),d)=U(\lib(V_2),\partial)$,
$$
dw_5=x'_1\otimes x'_1\otimes x''_1\otimes x'_1+x'_1\otimes x'_1\otimes x'_1\otimes x''_1-x''_1\otimes x'_1\otimes x'_1\otimes x'_1-x'_1\otimes x''_1\otimes x'_1\otimes x'_1.
$$
Equivalently, in the $A_\infty$-coalgebra structure on $H_*(K_1;\bq)$,
$$
\Delta_4\rho_6=\Delta_4 f^{0,1}(\gamma_6)\not=0.
$$
In other words, the class $\gamma_6$ $\Delta_4$-falls asleep at $K_2$.

Finally, we repeat this process once again, exhibiting via Theorem \ref{uno}, a Quillen model of the inclusion
$
i_2\colon K_2\hookrightarrow K_3
$,
$$
 (\lib(x_1,x_1',x'',y_3,y_3',w_5,z_5',v_5),\partial)\hookrightarrow (\lib(x_1,x_1',x'',y_3,y_3',y''_3,w_5,z_5',v_5),
$$
in which
$$
\partial y''_3=[x_1',x''_1].
$$
As in previous steps, we  get a DGL isomorphism
$$
(\lib(x_1,x_1',x'',y_3,y_3',y''_3,s_5,z_5',v_5),\partial)\stackrel{\cong}{\longrightarrow}( \lib(x_1,x_1',x'',y_3,y_3',y''_3,w_5,z_5',v_5),\partial),
$$
which sends $s_5$ to $w_5-[x'_1,[x'_1,y''_3]]$ and  is such that $\partial s_5=0$.
Set
$$
(\lib(V_3),\partial)=(\lib(x_1,x_1',x''_1,y_3,y_3',y''_3,s_5,z_5',v_5),\partial),
$$
and observe that the DGL morphism
$$
(\lib(V_2),\partial)\to(\lib(V_3),\partial)
$$
which sends $w_5$ to $s_5+[x'_1,[x'_1,y''_3]]$, and any other generator to itself, is  the Quillen minimal model of $i_2\colon K_2\hookrightarrow K_3$. In particular the map induced by this morphism  on the suspension of the indecomposables $sV_2\to sV_3$ is precisely the morphism $f_2=H_*(i_2)$.

Thus, on the one hand,
$$
f_2\colon \widetilde H_*(K_2;\bq)\longrightarrow \widetilde H_*(K_3;\bq)
$$
is naturally identified to the inclusion
$$
sV_2=\langle \alpha_2,\alpha'_2,\alpha''_2,\beta_4,\beta'_4,\rho_6,\gamma'_6,\gamma''_6\rangle \hookrightarrow
\langle \alpha_2,\alpha'_2,\alpha''_2,\beta_4,\beta'_4,\beta''_4,\varphi_6,\gamma'_6,\gamma''_6\rangle=sV_3,
$$
for which  $f_2(\rho_6)=\varphi_6$.

On the other hand, in the in the universal enveloping algebra $(T(V_3),d)=U(\lib(V_3),\partial)$,
$$
ds_5=0.
$$
Equivalently, in the $A_\infty$-coalgebra structure on $H_*(K_3;\bq)$,
$$
\Delta_4\varphi_6=\Delta_4 f^{0,2}(\gamma_6)=0,
$$
and the class $\gamma_6$ $\Delta_4$-wakes up again at $K_3$.

\end{proof}

\section{Appendix: basics of rational homotopy theory}
\label{RHT}
In this appendix, for completeness, we recall the basic facts and classical results from rational homotopy theory used along the paper.
For a compendium of this theory, we direct the reader to the standard reference   \cite{FHT00}.

A {\em differential graded Lie algebra} (DGL henceforth) is a differential graded vector space $(L,\partial)$ in which:

\begin{itemize}

\item[$\bullet$] $L=\oplus_{p\in\bz}L_p$ in endowed with a linear operation, called Lie bracket,
$$
[\,\,,\,]\colon L_p\otimes L_q\longrightarrow L_{p+q},\qquad p,q\in \bz,
$$
 satisfying {\em antisymmetry},
$$
[x,y]=(-1)^{|x||y|+1}[y,x],
$$
and {\em Jacobi identity},
$$
\bigl[x,[y,z]\bigr]=\bigl[[x,y],z\bigr]+(-1)^{|x||y|}\bigl[y,[x,z]\bigr],
$$
for any  homogeneous elements $x,y,z\in L$. In other words, $L$ is a graded Lie algebra.

\item[$\bullet$] The differential $\partial$, of degree $-1$, satisfies the {\em Leibniz rule},
$$
\partial[x,y]=[\partial x,y]+(-1)^{|x|}[x,\partial y],
$$
for any  pair of homogeneous elements $x,y\in L$.
\end{itemize}

The tensor algebra
$T(V)=\oplus_{n\ge 0} T^n(V)$ generated by the graded vector space $V$ is endowed with a graded Lie algebra structure with the brackets given by commutators:
$$
[a,b]=a\otimes b -(-1)^{|a||b|}b\otimes a
$$
for any homogeneous elements $a,b\in T(V)$.  Then, the {\em free Lie algebra} $\lib(V)$ is the Lie subalgebra of $T(V)$ generated by $V$.

Observe that $\lib (V)$ is filtered as follows,
$$
\lib(V)= \oplus_{n\ge 1}\lib^n(V),
$$
in which $\lib^n(V)$ is the vector space spanned by  Lie brackets of length $n$, that is $\lib^n(V)=\lib(V)\cap T^n(V)$. In particular, to set a differential in $\lib(V)$,
it is enough to define linear maps,
$$
\partial_n\colon V\longrightarrow \lib^n(V),\quad n\ge 1
$$
in such a way that $\partial=\sum_{n\ge 1}\partial_n$ squares to zero  and it satisfies the Leibniz rule. Note that $\partial_1$, the so called {\em
linear part of} $\partial$, is therefore a differential in $V$.

\begin{remark}\label{remark}

It is important to remark that, if $T(V)$ is endowed with a derivation $d$ for which $(T(V),d)$ is a  differential graded (non commutative) algebra, $\lib(V)$ does not inherit a differential as $d$ may not respect commutators. However, every time we have a DGL of the form $(\lib(V),\partial)$, the differential $\partial$ can be extended as a derivation of graded algebras to $T(V)$ to make it a differential graded algebra. In other words, consider the functor
$$
U\colon \catdgl\longrightarrow \catdga
$$
 which associates to every DGL $(L,\partial)$ its {\em universal enveloping algebra} $UL$ which is the graded algebra
 $$
 T(L)/\langle [x,y]-(x\otimes y -(-1)^{|a||b|}y\otimes x)\rangle,\quad x,y\in L,
 $$
 with the differential induced by $\partial$. Then, whenever $(L,\partial)=(\lib(V),\partial)$, one has,
 $$
 U (\lib(V),\partial)=(T(V),\partial).
 $$

 \end{remark}

In \cite{Qui69}, D. Quillen associates to every $1$-connected topological space $X$ of the homotopy type of a CW-complex, a particular differential graded Lie algebra  $(\lib(V),\partial)$ which is {\em reduced}, i.e., $V=\oplus_{p\ge 1} V_p$ for which:
\begin{itemize}

\item[(i)] $H_*(V,\partial_1)\cong s^{-1}\widetilde H_*(X;\bq)=\widetilde H_{*-1}(X;\bq)$.

\item[(ii)] $H_*(\lib(V),\partial)\cong \pi_*(\Omega X)\otimes \bq\cong \pi_{*+1}(X)\otimes\bq$.
\end{itemize}

This is a {\em Quillen model of $X$}  and it is called {\em minimal}  whenever $\partial$ is decomposable, i.e., $\partial_1=0$. In this case, (i) becomes,
$$
V\cong s^{-1}\widetilde H_*(X;\bq).
$$

The minimal Quillen model is unique up to isomorphism. Moreover, its  construction is functorial and it defines an equivalence between the homotopy category of $1$-connected spaces of the homotopy type of rational CW-complexes and that of reduced differential graded Lie algebras over $\bq$. Thus, two simply connected complexes have isomorphic Quillen minimal models if and only if they have the same rational homotopy type.

In particular, if $\varphi\colon (\lib(U),\partial_U)\to (\lib(V),\partial_V)$ is the Quillen minimal model of the map $f\colon X\to Y$, the morphism $H_*(\varphi)\colon H_*(\lib(U),\partial_U)\to H_*(\lib(V),\partial_V)$ is naturally identified to $\pi_*(\Omega f)\colon \pi_*(\Omega X)\to \pi_*(\Omega Y)$. In the same way, the morphism induced by $\varphi$ at the ``indecomposables'' $\varphi_0\colon U\to V$ is naturally identified to $s^{-1}H_*(f)\colon s^{-1}\widetilde H_*(X;\bq)\to s^{-1}\widetilde H_*(Y;\bq)$, the desuspension of the morphism induced by $f$ in rational homology. To explicitly obtain $\varphi_0$, write, for each $u\in U$, $\varphi(u)=v+\Gamma$, with $v\in V$ and  $\Gamma\in \lib^{\ge 2}(V)$. Then $\varphi_0(u)=v$.

In the $A_\infty$-language, Remark \ref{remark} together with the isomorphism (i) above, asserts that $\widetilde H_*(X;\bq)$ can be functorially endowed with a structure of $A_{\infty}$-coalgebra which is precisely the standard structure defined in \S1.

Given a $1$-connected CW-complex $X$, a (non necessarily minimal)  Quillen model of $X$ can be described in terms of  a CW-decomposition of $X$ via the following,

\begin{theorem}\label{uno}\cite[III.3]{Tan83} Let $Y=X\cup_fe^{n+1}$ be the space obtained attaching  an $n+1$-cell to $X$ via the map $f\colon \bs^n\to X$. Let $(\lib(V),\partial)$ be a Quillen model of $X$ and let $\Phi\in \lib(V)_{n-1}$ be a cycle representing the homology class in $H_{n-1}(\lib(V),\partial)$ which is identified, via the isomorphism (ii) above, with the homotopy class $[f]\in\pi_n(X)\otimes\bq$. Then, the injection
$$
(\lib(V),\partial)\hookrightarrow (\lib(V\oplus \bq a),\partial),\qquad \partial a=\Phi
$$
is a Quillen Model of the inclusion
$$
X\hookrightarrow Y.
$$
\end{theorem}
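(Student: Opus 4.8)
The plan is to realize $Y$ as a homotopy pushout and transport that pushout across Quillen's equivalence. Since $D^{n+1}$ is contractible, attaching the cell exhibits $Y=X\cup_fe^{n+1}$ as the homotopy cofiber of $f\colon\bs^n\To X$, that is, as the homotopy pushout of the diagram $D^{n+1}\hookleftarrow \bs^n\stackrel{f}{\To}X$. Because the minimal Quillen model construction is (part of) an equivalence between the homotopy category of $1$-connected rational spaces and that of reduced DGL's, its derived functor preserves homotopy pushouts; hence a Quillen model of $Y$ is obtained as the homotopy pushout, computed in $\catdgl$, of chosen models of the three spaces and two maps in this diagram.

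First I would fix convenient models for the sphere and the disk. By property (i), the only reduced rational homology of $\bs^n$ sits in degree $n$, so its minimal Quillen model is $(\lib(u),0)$ with $|u|=n-1$. The contractible disk is modeled by the acyclic free DGL $(\lib(u,b),\partial)$ with $\partial b=u$ and $|b|=n$, and the inclusion $\bs^n\hookrightarrow D^{n+1}$ by the free extension $(\lib(u),0)\hookrightarrow(\lib(u,b),\partial)$, which is a cofibration. To model $f$, note that by property (ii) the generator $\pi_n(\bs^n)\otimes\bq\cong H_{n-1}(\lib(u),0)$ is the class of $u$, and $f_*$ sends it to $[f]\in\pi_n(X)\otimes\bq\cong H_{n-1}(\lib(V),\partial)$; thus a model of $f$ is the DGL morphism $(\lib(u),0)\To(\lib(V),\partial)$ sending $u$ to the prescribed cycle $\Phi$, which is well defined precisely because $\Phi$ is a $\partial$-cycle of degree $n-1$.

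It then remains to compute the pushout $(\lib(V),\partial)\amalg_{(\lib(u),0)}(\lib(u,b),\partial)$. Coproducts of free DGL's are free on the direct sum of the generators, and the pushout over $\lib(u)$ merely identifies the two images of $u$, namely $\Phi\in\lib(V)$ and $u\in\lib(u,b)$. The outcome is therefore $\lib(V\oplus\bq b)$ with differential extending $\partial$ on $V$ by $\partial b=\Phi$; renaming $b$ as $a$ gives exactly $(\lib(V\oplus\bq a),\partial)$ with $\partial a=\Phi$, and the canonical structure map from $(\lib(V),\partial)$ into the pushout is the asserted inclusion, which by construction models $X\hookrightarrow Y$.

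The main obstacle is the homotopical bookkeeping rather than any single computation: one must justify that the strict pushout above genuinely computes the homotopy pushout, which rests on the cofibrancy of the free DGL $(\lib(V),\partial)$ together with the fact that $(\lib(u),0)\hookrightarrow(\lib(u,b),\partial)$ is a cofibration, and one must invoke that Quillen's equivalence carries the homotopy pushout of spaces to that of models. A cleaner but more laborious alternative, avoiding model-category language altogether, is to verify directly that $(\lib(V\oplus\bq a),\partial)$ satisfies conditions (i) and (ii) for $Y$: property (i) would follow from the homology long exact sequence of the extension, the new generator $a$ contributing exactly the class dual to the attached $(n+1)$-cell and adjusting $\widetilde H_*$ as cell attachment dictates, while property (ii) would follow by comparing the resulting homotopy long exact sequence with that of the inclusion $X\hookrightarrow Y$.
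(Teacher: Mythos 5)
The paper gives no proof of this statement, citing it verbatim from Tanr\'e \cite[III.3]{Tan83}, and your argument is essentially the standard one found there: model the cofibration $\bs^n\hookrightarrow D^{n+1}$ by the free extension $(\lib(u),0)\hookrightarrow(\lib(u,b),\partial b=u)$ with $|u|=n-1$, model $f$ by $u\mapsto\Phi$, and compute the strict pushout of free DGL's, which is the homotopy pushout since all objects involved are cofibrant and the extension is a cofibration. Your one imprecision is correctly self-diagnosed: preservation of homotopy pushouts does not follow from an equivalence of homotopy categories alone, but from the fact that Quillen's equivalence is induced by a zigzag of Quillen equivalences of model categories, whose derived functors carry the cofiber sequence $\bs^n\to X\to Y$ to the displayed free extension.
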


Finally, we recall that a simply connected complex is {\em formal} if its rational homotopy type depends only on its rational cohomology algebra. Given a commutative graded algebra $H$ there is a formal simply connected complex $X$, unique up to rational homotopy, such that $H^*(X;\bq)=H$.

\vspace{10mm}
\noindent \textsc{Departamento de \'Algebra, Geometr\'ia y Topolog\'ia, 
Universidad de M\'alaga, Ap. 59, 29080 M\'alaga, Spain.}
\\
frbegu@gmail.com
\\
aniceto@uma.es

\end{document}